\documentclass[12pt,a4paper]{article}
\usepackage{amsthm}
\usepackage[cp1251]{inputenc}   
\usepackage[english, russian]{babel}

\usepackage{amsmath}
\usepackage{amsfonts,amssymb}

\usepackage{graphicx}                 

\usepackage{psfrag}                    
\usepackage{colordvi}

\usepackage{cite}               
\usepackage{srcltx}            
\bibliographystyle{splncs03}
\usepackage{url}
\usepackage{amsxtra}
\usepackage{algorithm}
\usepackage{algorithmic}
\usepackage{caption}
\usepackage{makeidx}
\usepackage{multirow}

\usepackage{mathtools}
\usepackage{algorithm}
\DeclarePairedDelimiter\bracket{(}{)}
\newcommand{\br}[1]{\bracket*{#1}}

\DeclarePairedDelimiter\modbracket{|}{|}
\newcommand{\mbr}[1]{\modbracket*{#1}}
\newtheorem{definition}{Определение}
\newtheorem{example}{Пример}
\newtheorem{remark}{Замечание}
\newtheorem{lemma}{Лемма}
\newtheorem{theorem}{Теорема}
\newtheorem{corollary}{Следствие}

\begin{document}
\begin{center}
{\bf АНАЛОГ КВАДРАТИЧНОЙ ИНТЕРПОЛЯЦИИ ДЛЯ СПЕЦИАЛЬНОГО КЛАССА НЕГЛАДКИХ ФУНКЦИОНАЛОВ И ОДНО ЕГО ПРИЛОЖЕНИЯ К ОЦЕНКАМ СКОРОСТИ СХОДИМОСТИ АДАПТИВНОГО ЗЕРКАЛЬНОГО СПУСКА ДЛЯ УСЛОВНЫХ ЗАДАЧ ОПТИМИЗАЦИИ
\\[2ex]
Ф.~С.~Стонякин}\\[3ex]
\end{center}

{\bf MSC: 90C25, 90С06, 49J52} \bigskip

\section{Введение}

Многие численные методы оптимизации основаны на идее удачной аппроксимации оптимизируемого функционала некоторым функционалом стандартного типа. Например, для гладкого целевого функционала $f:\mathbb{R}^n\rightarrow\mathbb{R}$ с липшицевым градиентом
\begin{equation}\label{eq1}
\|\nabla f(x)-\nabla f(y)\|_*\leqslant L\|x-y\|\;\forall x,y\in Q
\end{equation}
на области определения $Q\subset\mathbb{R}^n$ хорошо известно неравенство
\begin{equation}\label{eq2}
|f(y)-f(x)-\langle\nabla f(x),y-x\rangle|\leqslant\frac{L\|y-x\|^2}{2}
\end{equation}
для всяких $x$ и $y$ из $Q$, то есть
\begin{equation}\label{eq3}
\varphi_2(x,y)\leqslant f(y)\leqslant\varphi_1(x,y)\;\forall x,y\in Q,
\end{equation}
где $$\varphi_1(x,y)=f(x)+\langle\nabla f(x),y-x\rangle+\frac{L\|y-x\|^2}{2}, \text{ а }$$
$$\varphi_2(x,y)=f(x)+\langle\nabla f(x),y-x\rangle+\frac{L\|y-x\|^2}{2}.$$

Как известно \cite{bib_Nesterov}, указанные неравенства \eqref{eq2} -- \eqref{eq3} позволяют обосновать не только глобальную сходимость методов градиентного типа, но и оценивать скорость такой сходимости. Отметим также \cite{bib_Nesterov}, что похожие на \eqref{eq2} -- \eqref{eq3} неравенства можно выписать для негладкого функционала, равного максимуму конечного числа гладких функционалов c липшицевым градиентом.

Неравенства \eqref{eq2} -- \eqref{eq3} широко используются для обоснования скорости сходимости самых разных методов для задач как условной, так и безусловной оптимизации. Например, недавно в \cite{bib_Adaptive} были предложены алгоритмы зеркального спуска как с адаптивным выбором шага, так и с адаптивным критерием остановки. При этом помимо случая липшицевых целевого функционала и функционального ограничения в (\cite{bib_Adaptive}, п. 3.3) на базе идеологии \cite{bib_Nesterov,bib_Nesterov2016} был предложен оптимальный с точки зрения нижних оракульных оценок \cite{nemirovsky1983problem} метод для условных задач выпуклой минимизации с целевыми функционалами, обладающими свойством липшицевости градиента. В частности, в задачах с квадратичными функционалами мы сталкиваемся с ситуацией, когда функционал не удовлетворяет обычному свойству Липшица (или константа Липшица достаточно большая), но градиент удовлетворяет условию Липшица. Для задач такого типа в (\cite{bib_Adaptive}, п. 3.3) был предложен адаптивный алгоритм зеркального спуска. Модификация метода (\cite{bib_Adaptive}, п. 3.3) в случае нескольких ограничений рассмотрена в \cite{bib_Stonyakin}.

Основной результат настоящей статьи (теорема \ref{th1}) --- обоснование возможности построения аналога неравенств стандратной квадратичной интерполяции \eqref{eq2} -- \eqref{eq3} для специального класса негладких квазивыпуклых функционалов с {\it липшицевым субградиентом} (определение \ref{Main_Def}). Идея предлагаемой концепции свойства липшицевости субградиента заключается в том, чтобы описать изменение аппроксимации функционала при условии наличия некоторого (не более, чем счётного) набора точек с особенностями. Основной результат работы (теорема \ref{th1}) показывает, как эти особенности могут приводить к изменению модели функционала, пригодной для построения методов оптимизации. Поскольку локально липшицевы квазивыпуклые функционалы могут не иметь субдифференциала в смысле выпуклого анализа, то для описания дифференциальных свойств мы используем наиболее известное обобщение понятия субдифференциала на невыпуклые функционалы --- субдифференциал Кларка \cite{bib_Clarke}, а под субградиентами всюду далее понимаем элементы (векторы) субдифференциала Кларка как множества. Отметим, что для выпуклых функционалов субдифференциал Кларка совпадает с  обычным субдифференциалом в смысле выпуклого анализа. Построен пример негладкого выпуклого функционала из указанного класса, который может иметь сколь угодно большую константу Липшица при нулевой константе Липшица субградиента (пример \ref{Main_Example}). Как приложение, обоснована оптимальность метода (\cite{bib_Stonyakin}, п. 4) для условных задач с локально липшицевым целевым функционалом и несколькими выпуклыми липшицевыми функциональными ограничениями. Для оценки скорости алгоритма 1 доказан аналог известного утверждения (\cite{bib_Nesterov}, лемма 3.2.1) в классе непрерывных квазивыпуклых локально липшицевых функционалов с использованием субдифференицала Кларка для описания их дифференциальных свойств (теорема \ref{Nesterov_lem}).

Всюду далее будем считать, что $(E,||\cdot||)$~--- конечномерное нормированное векторное пространство и $E^*$~--- сопряженное пространство к $E$ со стандартной нормой:
$$||y||_*=\max\limits_x\{\langle y,x\rangle,||x||\leq1\},$$
где $\langle y,x\rangle$~--- значение линейного непрерывного функционала $y$ в точке $x \in E$, $Q\subset E$~--- замкнутое выпуклое множество.

\section{Об аналоге условия Липшица градиента для специального класса негладких функционалов}

В данном разделе мы покажем, как можно обобщить свойство \eqref{eq1} и оценки \eqref{eq2}--\eqref{eq3} на некоторый класс квазивыпуклых локально липшицевых функционалов $f:Q\rightarrow\mathbb{R}\;(Q\subset\mathbb{R}^n)$, не дифференцируемых на некотором счетном подмножестве $Q_0\subset Q$. Напомним, что функционал $f:Q\rightarrow\mathbb{R}$ квазивыпуклый, если:
\begin{equation}\label{eq19}
f((1-t)x+ty)\leqslant\max\{f(x),f(y)\}\quad\forall t\in[0;1]\quad\forall x,y\in Q,
\end{equation}

Введем класс негладких квазивыпуклых функционалов, допускающих аналоги оценок \eqref{eq2}--\eqref{eq3}. Будем считать $f$ дифференцируемой во всех точках $Q\setminus Q_0$ и полагать, что для произвольного $x\in Q_0$ существует компактный субдифференциал Кларка $\partial_{Cl}f(x)$. Напомним это понятие (\cite{bib_Clarke}, $\S$ 2.2). Пусть $x_{0} \in \mathbb{R}^n$ --- фиксированная точка, и $h \in \mathbb{R}^n$ --- фиксированное направление. Положим
$$
f_{Cl}^{\uparrow} (x_{0}; h) = \limsup\limits_{x^\prime
\rightarrow x_{0},~\alpha \downarrow 0}\frac {1}{\alpha}
\left[f(x^\prime + \alpha h) - f(x^\prime)\right]~.
$$

Величинf $f_{Cl}^{\uparrow} (x_{0}; h)$ называется верхней производной Кларка функционала $f$ в точке $x_{0}$ по направлению $h$. Как известно, функция $f_{Cl}^{\uparrow} (x_{0}; h)$ субаддитивна и положительно однородна по $h$ (\cite{bib_Clarke},
с. 17 -- 18). Это обстоятельство позволяет определить субдифференциал функционала $f$ в точке $x_{0}$ как следующее
множество:
\begin{equation}\label{equv_Clarke}
\partial_{Cl} f(x_{0}) := \{v \in \mathbb{R} \quad |\quad f_{Cl}^{\uparrow} (x_{0}; g)\geqslant vg \quad \forall g \in
\mathbb{R}\}~,
\end{equation}
то есть как субдифференциал выпуклого по $h$ функционала $f_{Cl}^{\uparrow} (x_{0}; h) $ в точке $h = 0$ в смысле выпуклого анализа. Таким образом, по определению
\begin{equation}\label{fedyor_f01}
f_{Cl}^{\uparrow} (x_{0}; h) = \max\limits_{v \in
\partial_{Cl} f(x_{0})} \langle v, h \rangle~.
\end{equation}

Будем говорить, что функционал $f$ субдифференцируем по Кларку в точке $x_0$, если множество $\partial_{Cl} f(x_0)$ непусто и компактно. В частности, если функция $f$ локально липшицева, то она является субдифференцируемой по Кларку в любой точке области определения. Отметим, что для выпуклых функций субдифференциал Кларка совпадает с обычным субдифференциалом в смысле выпуклого анализа \cite{bib_Clarke}. В дальнейших рассуждениях для фиксированных $x,y\in Q$ при $t\in[0;1]$ будем обозначать $y_t:=(1-t)x+ty$.

\begin{definition}\label{Main_Def}
Будем говорить, что квазивыпуклый локально липшицевый функционал $f:Q\rightarrow\mathbb{R}\;(Q\subset\mathbb{R}^n)$ имеет $(\delta,L)$-липшицев субградиент ($f\in C_{L,\delta}^{1,1}(Q)$), если:
\begin{itemize}
\item[(i)] для произвольных $x,y\in Q$ $f$ дифференцируем во всех точках множества $\{y_t\}_{0\leqslant t\leqslant1}$, за исключением последовательности (возможно, конечной)
\begin{equation}\label{eq4}
\{y_{t_k}\}_{k=1}^{\infty}:\;t_1<t_2<t_3<\ldots\text{ и }\lim_{k\rightarrow\infty}t_k=1;
\end{equation}
\item[(ii)] для последовательности точек из \eqref{eq4} существуют конечные субдифференциалы Кларка $\{\partial f(y_{t_k})\}_{k=1}^{\infty}$ и
\begin{equation}\label{eq5}
diam\;\partial_{Cl} f(y_{t_k})=:\delta_k>0,\text{ где }\sum_{k=1}^{+\infty}\delta_k=:\delta<+\infty.
\end{equation}
$$(diam\;\partial_{Cl} f(x)=\max\{\|y-z\|_*\,\mid\,y,z\in\partial_{Cl} f(x)\});$$
\item[(iii)] для произвольных $x,y\in Q$ при условии, что $y_t\in Q\setminus Q_0$ при всяком $t\in(0,1)$ (то есть существует градиент $\nabla f(y_k)$) для некоторой фиксированной константы $L>0$, не зависящей от выбора $x$ и $y$, выполняется неравенство:
\begin{equation}\label{eq6}
\min_{\substack{\hat{\partial}f(x)\in\partial_{Cl} f(x),\\\hat{\partial}f(y)\in\partial_{Cl} f(y)}}\|\hat{\partial}f(x)-\hat{\partial}f(y)\|_*\leqslant L\|x-y\|.
\end{equation}
\end{itemize}
\end{definition}

Ясно, что всякий локально липшицев квазивыпуклый функционал, удовлетворяющий \eqref{eq1}, будет входить в класс $C_{L,\delta}^{1,1}(Q)$ при $\delta=0$. Приведем пример негладкой вещественной выпуклой функции $f\in C_{L,\delta}^{1,1}(Q)$ при $\delta>0$.

\begin{example}\label{Main_Example}
Зафиксируем некоторое $k>0$, величину $\delta>0$ и рассмотрим кусочно-линейную функцию $f:[0;1]\rightarrow\mathbb{R}$ (здесь $Q=[0;1]\subset\mathbb{R}$):
\begin{equation}\label{eq7}
f(x):=kx\text{ при }0\leqslant x\leqslant\frac{1}{2},
\end{equation}
$$
f(x):=\left(k+\sum_{i=1}^{n}\frac{\delta}{2^i}\right)x-\sum_{i=1}^n\frac{\delta}{2^i}\left(1-\frac{1}{2^i}\right)\\
\text{при }1-\frac{1}{2^n}<x\leqslant1-\frac{1}{2^{n+1}},
$$
$$
f(1):=\lim_{x\rightarrow+1}f(x).
$$
В этом случае $$Q_0=\left\{1-\frac{1}{2^n}\right\}_{n=1}^{\infty},$$
$$\partial f(q_n)=\left[k+\sum\limits_{i=1}^{n-1}\frac{\delta}{2^i}; ~ k+\sum\limits_{i=1}^n\frac{\delta}{2^i}\right]$$ при $n>1$ (здесь $\partial f(\cdot)$ --- субдифференциал в смысле выпуклого анализа), $$\partial f(q_1)=\left[k;k+\frac{\delta}{2}\right]$$ (здесь $q_n=1-\frac{1}{2^n}$ при $n=1,2,3,\ldots$). Ясно, что $\partial f(q_n)=\frac{\delta}{2^n}$, то есть верно \eqref{eq5} для введенной величины $\delta>0$. При этом на отрезках $(q_n;q_{n+1})$ и $(0;q_1)$ функция $f$ имеет липшицев градиент с константой $L=0$. Поэтому для функции $f$ из \eqref{eq7} верно $f\in C_{0,\delta}^{1,1}(Q)$.
\end{example}

\begin{remark}
Ясно, что функцию $f$ из \eqref{eq7} нельзя представить в виде максимума конечного набора линейных функций, поскольку $f$ имеет  бесконечное число точек недифференцируемости $f$.
\end{remark}

Сформулируем для введенного класса функционалов $C_{L,\delta}^{1,1}(Q)$ аналог леммы 1.2.3 из \cite{bib_Nesterov}.

\begin{theorem}\label{th1}
Пусть локально липшицев квазивыпуклый функционал $f\in C_{L,\delta}^{1,1}(Q)$. Тогда для произвольных $x,y\in Q$ верно неравенство
\begin{equation}\label{eq8}
|f(y)-f(x)-\langle\hat{\partial}f(x),y-x\rangle|\leqslant\frac{L}{2}\|y-x\|^2+\delta\|y-x\|
\end{equation}
для некоторого субградиента $\hat{\partial}f(x)\in\partial_{Cl} f(x)$.
\begin{proof}
Для произвольных фиксированных $x,y\in Q$ через $y_t$ будем обозначать элемент $ty+(1-t)x$. Тогда при фиксированных $x$ и $y$ одномерная функция $\varphi:[0;1]\rightarrow\mathbb{R}$ ($\varphi(0)=f(x)$ и $\varphi(1)=f(y)$)
\begin{equation}\label{eq20}
\varphi(t)=f(y_t)=f((1-t)x+ty)
\end{equation}
будет квазивыпуклой и для некоторого $\hat{t}\in[0;1]$ отрезки $[0;\hat{t}]$ и $[\hat{t};1]$ будут промежутками (вообще говоря, нестрогой) монотонности функции $\varphi$.

Поскольку для всякой точки $y_t\;(t\in[0;1])$ существует конечный субдифференциал Кларка $\partial_{Cl} f(y_t)$, а также функционал $f$  локально липшицев и квазивыпуклый, то для всех $t\in(0;1)$ существуют конечные левосторонняя и правосторонняя производные:
\begin{equation}\label{eq9}
\begin{split}
\varphi_-'(t)=\lim_{\Delta t\rightarrow-0}\frac{\varphi(t+\Delta t)-\varphi(t)}{\Delta t},\quad
\varphi_+'(t)=\lim_{\Delta t\rightarrow+0}\frac{\varphi(t+\Delta t)-\varphi(t)}{\Delta t}
\end{split}
\end{equation}
и
\begin{equation}\label{eq10}
\varphi_+'(t)=\max_{\hat{\partial}f(y_t)\in\partial_{Cl} f(y_t)}\langle\hat{\partial_{Cl}}f(y_t),y-x\rangle\text{~---}
\end{equation}
производная $f$ по направлению $y-x$ в точке $y_t$. Ясно, что при $y_t\not\in Q_0$ (то есть существует градиент $\nabla f(y_t)$)
\begin{equation}\label{eq11}
\varphi_-'(t)=\varphi_+'(t)=\langle\nabla f(y_t),y-x\rangle.
\end{equation}

Ввиду квазивыпуклости $f$(и $\varphi$) можно полагать, что функция $\varphi$ абсолютно непрерывна и почти всюду дифференцируема в смысле классической меры Лебега, т.е. имеем равенства:
$$f(y)=f(x)+\int_{[0;1]\setminus Q_0}\langle\nabla f(y_t),y-x\rangle\,dt=\varphi(0)+\int_0^1\varphi_+'(t)\,dt,$$
откуда для произвольного субградиента $\hat{\partial}f(x)\in\partial f(x)$ имеем:
\begin{equation}\label{eq12}
f(y)=f(x)+\langle\hat{\partial}f(x),y-x\rangle+\int_0^1\left[\max_{\hat{\partial}f(y_t)\in\partial_{Cl} f(y_t)}\langle\hat{\partial}f(y_t),y-x\rangle-\langle\hat{\partial}f(x),y-x\rangle\right]\,dt=
\end{equation}
$$
=f(x)+\langle\hat{\partial}f(x),y-x\rangle+\int_0^1\langle\hat{\partial}f(y_t)-\hat{\partial}f(x),y-x\rangle\,dt
$$
для набора субградиентов $\{\hat{\partial_{Cl}}f(y_t)\}_{t\in(0;1]}$, на которых достигаются соответствующие максимумы. Если $y_t\in Q_0$, то $y_t=q_k\;(k\geqslant1)$ из определения \ref{eq1} (ii) и тогда
\begin{equation}\label{eq13}
\begin{split}
\varphi_+'(t)-\varphi_-'(t)=\langle\hat{\partial}_1f(y_t)-\hat{\partial}_2f(y_t),y-x\rangle=\\
=\langle\hat{\partial}_1f(q_k)-\hat{\partial}_2f(q_k),y-x\rangle\leqslant\\
\leqslant\|\hat{\partial}_1f(q_k)-\hat{\partial}_2f(q_k)\|_*\cdot\|y-x\|\stackrel{\eqref{eq4}}{\leqslant}\delta_k\|y-x\|\\
\end{split}
\end{equation}
для соответствующих субградиентов (векторов-элементов субдифференциалов Кларка)\\ $\hat{\partial}_{1,2}f(q_k) \in \partial_{Cl} f(q_{k})$. Не уменьшая общности рассуждений, будем считать, что
\begin{equation}\label{eq14}
x,y\in Q_0\subset\{y_t\}_{t\in[0;1]}
\end{equation}
и всякому $q_n$ поставим в соответствие $t_n\in[0;1]:q_n=(1-t_n)x+t_ny$.

Пусть существует последовательность
$$\{t_n\}_{n=1}^{\infty}:0=t_1<t_2<\ldots<1,\;\lim_{n\rightarrow\infty}t_n=1.$$
Тогда $\forall\tau_1,\tau_2\in(t_k;t_{k+1})$ при $k\geqslant1$ верны неравенства:
\begin{equation}\label{eq15}
\|\nabla f(y_{\tau_2})-\nabla f(y_{\tau_1})\|_*\leqslant L|\tau_2-\tau_1|\cdot\|y-x\|,
\end{equation}
\begin{equation}\label{eq16}
\left|\varphi_+'(t_k)-\varphi_-'(t_{k+1})\right|\leqslant\frac{1}{2}L(t_{k+1}-t_k)\cdot\|y-x\|^2.
\end{equation}
Поэтому при выборе в \eqref{eq12} подходящего субградиента $\hat{\partial}f(x)$ будут выполняться соотношения:
$$\mbr{f(y)-f(x)-\langle\hat{\partial}f(x),y-x\rangle}\stackrel{\eqref{eq12}}{=}\mbr{\int_0^1\langle\hat{\partial}f(y_t)-\hat{\partial}f(x),y-x\rangle\,dt}=$$
$$=\mbr{\int_0^1(\varphi_+'(t)-\varphi_+'(0))\,dt}\leqslant\mbr{\int_0^1\langle\hat{g}(y_t)-\hat{\partial}f(x),y-x\rangle\,dt}+\sum_{k=1}^{+\infty}\delta_k\|y-x\|,$$
причем $\forall\tau_1,\tau_2\in[0;1]$
\begin{equation}\label{eq17}
\|\hat{g}(y_{\tau_1})-\hat{g}(y_{\tau_2})\|_*\leqslant L\|y_{\tau_1}-y_{\tau_2}\|,
\end{equation}
откуда
$$\mbr{\int_0^1\langle\hat{g}(y_t)-\hat{\partial}f(x),y-x\rangle\,dt}=\mbr{\int_0^1\langle\hat{g}(y_t)-\hat{g}(x),y-x\rangle\,dt}\leqslant$$
$$\leqslant\int_0^1\mbr{\langle\hat{g}(y_t)-\hat{g}(x),y-x\rangle}\,dt\leqslant\int_0^1\|\hat{g}(y_t)-\hat{g}(x)\|_*\,dt\cdot\|y-x\|\stackrel{\eqref{eq17}}{\leqslant}$$
$$\leqslant L\int_0^1\|y_t-x\|\,dt\cdot\|y-x\|\leqslant L\|y-x\|^2\cdot\int_0^1t\,dt=\frac{L}{2}\|y-x\|^2,$$
то есть
\begin{equation}\label{eq18}
|f(y)-f(x)-\langle\hat{\partial}f(x),y-x\rangle|\leqslant\frac{L}{2}\|y-x\|^2+\delta\|y-x\|,
\end{equation}
что и требовалось.
\end{proof}
\end{theorem}

\begin{corollary}
Если $f\in C_{L,\delta}^{1,1}(Q)$, то для произвольных $x, y \in Q$ верны неравенства:
$$f(y)\leqslant f(x)+\max\|\hat{\partial}f(x)\|_*\cdot\|y-x\|+\delta\|y-x\|+\frac{L}{2}\|y-x\|^2=$$
$$=f(x)+\br{\max\|\hat{\partial}f(x)\|_*+\delta}\|y-x\|+\frac{L}{2}\|y-x\|^2.$$
\end{corollary}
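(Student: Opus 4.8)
The plan is to obtain the corollary as an immediate consequence of Theorem \ref{th1}. That theorem already furnishes the two-sided estimate
$$
|f(y)-f(x)-\langle\hat{\partial}f(x),y-x\rangle|\leqslant\frac{L}{2}\|y-x\|^2+\delta\|y-x\|
$$
for some $\hat{\partial}f(x)\in\partial_{Cl} f(x)$. The first step is to discard the lower half of the absolute value and retain only the upper inequality, which rearranges into
$$
f(y)\leqslant f(x)+\langle\hat{\partial}f(x),y-x\rangle+\frac{L}{2}\|y-x\|^2+\delta\|y-x\|.
$$

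Next I would estimate the linear term using the duality inequality $\langle v,h\rangle\leqslant\|v\|_*\|h\|$, which is nothing but the defining property of the dual norm $\|\cdot\|_*$. Taking $v=\hat{\partial}f(x)$ and $h=y-x$ gives $\langle\hat{\partial}f(x),y-x\rangle\leqslant\|\hat{\partial}f(x)\|_*\,\|y-x\|$. Since Theorem \ref{th1} only asserts the estimate for one particular subgradient, I would then bound $\|\hat{\partial}f(x)\|_*$ from above by $\max_{v\in\partial_{Cl} f(x)}\|v\|_*$, which dominates the norm of whatever element the theorem produces. Substituting and grouping the two terms proportional to $\|y-x\|$ yields exactly the asserted chain of (in)equalities.

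I do not anticipate any real obstacle here: the statement is essentially a one-line corollary of Theorem \ref{th1} combined with the definition of $\|\cdot\|_*$. The only point deserving a word of justification is the replacement of the specific subgradient by the maximum over the entire Clarke subdifferential, together with the remark that $\partial_{Cl} f(x)$ is a compact convex set, so this maximum is attained and finite and the passage is legitimate for every choice of $\hat{\partial}f(x)$.
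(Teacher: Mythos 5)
Your proposal is correct and coincides with the paper's (implicit) argument: the corollary is stated without proof precisely because it follows from Theorem \ref{th1} by dropping the lower half of the absolute-value bound, applying $\langle\hat{\partial}f(x),y-x\rangle\leqslant\|\hat{\partial}f(x)\|_*\|y-x\|$, and majorizing by the maximum over the compact set $\partial_{Cl}f(x)$. Nothing further is needed.
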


\section{Пример приложения: адаптивный зеркальный спуск для задач минимизации квазивыпуклого целевого функционала рассматриваемого класса гладкости}\label{SectLipGrad}

В качестве приложения покажем возможность получения оценок скорости сходимости для метода (\cite{bib_Stonyakin}, алгоритм 4) в более широком классе целевых функционалов. Напомним, что метод (\cite{bib_Stonyakin}, алгоритм 4) мы рассматривали условных задач выпуклой минимизации с условием липшицевости градиента целевого функционала. Например, квадратичный целевой функционал может не удовлетворять обычному свойству Липшица (или константа Липшица может быть довольно большой), но его градиент удовлетворяет условию Липшица. Метод (\cite{bib_Stonyakin}, алгоритм 4) применим и для более широкого класса уже негладких выпуклых целевых функционалов
\begin{equation}\label{equiv_nonstand1}
f(x)=\max\limits_{1\leqslant i\leqslant m} f_i(x),
\end{equation}
где
\begin{equation}\label{equiv_nonstand2}
f_i(x)=\frac{1}{2}\langle A_ix,x\rangle-\langle b_i,x\rangle+\alpha_i,\;i=1,\ldots,m,
\end{equation}
в случае, когда $A_i$ ($i=1,\ldots,m$)~--- положительно определённые матрицы: $x^TA_ix\geqslant 0\ \forall x \in Q$.

Начнём с постановки рассматриваемых задач условной оптимизации, а также необходимых вспомогательные понятий. Рассмотрим набор выпуклых субдифференцируемых функционалов $g_m:X\rightarrow\mathbb{R}$ для $m = \overline{1, M}$. Также предположим, что все функционалы $g_m$ удовлетворяют условию Липшица с некоторой константой $M_g$:
\begin{equation}\label{equv1}
|g_m(x)-g_m(y)|\leqslant M_g||x-y||\quad \forall x,y\in Q, \quad m = \overline{1, M}.
\end{equation}

Мы рассматриваем следующий тип задач оптимизации квазивыпуклого локально липшицева целевого функционала $f$ с выпуклыми липшицевыми функциональными ограничениями.
\begin{equation}\label{equv2}
 f(x) \rightarrow \min\limits_{x\in Q},
\end{equation}
где
\begin{equation}
\label{problem_statement_g}
    g_m(x) \leqslant 0 \quad \forall m = \overline{1, M}.
\end{equation}

Сделаем предположение о разрешимости задачи \eqref{equv2}--\eqref{problem_statement_g}. Задачи минимизации негладкого функционала c ограничениями возникают в широком классе проблем современной large-scale оптимизации и её приложений \cite{bib_ttd,bib_Shpirko}. Для таких задач имеется множество методов, среди которых можно отметить метод зеркального спуска \cite{beck2003mirror,nemirovsky1983problem}. Отметим, что в случае негладкого целевого функционала или функциональных ограничений естественно использовать субградиентные методы, восходящие к хорошо известным работам \cite{polyak1967general, shor1967generalized}. Метод зеркального спуска возник для безусловных задач в \cite{nemirovskii1979efficient,nemirovsky1983problem} как аналог стандартного субградиентного метода с неевклидовым проектированием. Для условных задач аналог этого метода был предложен в \cite{nemirovsky1983problem} (см. также \cite{beck2010comirror}). Проблема адаптивного выбора шага без использования констант Липшица рассмотрена в \cite{bib_Nemirovski} для задач без ограничений, а также в \cite{beck2010comirror} для задач с функциональными ограничениями.

Отметим, что всюду далее будем под субградинетом квазивыпуклого (локально липшицева) функционала $f$ понимать любой элемент (вектор) субдифференциала Кларка. Для выпуклых функционалов $g_m$ понятие субградиента мы понимаем стандартно.

Для дальнейших рассуждений нам потребуются следующие вспомогательные понятия (см., например, \cite{bib_Nemirovski}), позволяющие оценить качество найденного решения. Для оценки расстояния от текущей точки до решения введём так называемую {\it прокс-функцию} $d : X \rightarrow \mathbb{R}$, обладающую свойством непрерывной дифференцируемости и $1$-сильной выпуклости относительно нормы $\lVert\cdot\rVert$, т.е.
$$\langle \nabla d(x) - \nabla d(y), x-y \rangle \geqslant \lVert x-y \rVert^2 \quad \forall x, y, \in X$$
и предположим, что $\min\limits_{x\in X} d(x) = d(0).$ Будем полагать, что имеется некоторая оценка расстояния от точки старта до искомого решения задачи $x_*$, т.е. существует такая константа $\Theta_0 > 0$, что $d(x_{*}) \leqslant \Theta_0^2,$ где $x_*$~--- точное решение (\ref{equv2})--(\ref{problem_statement_g}). Если имеется множество решений $X_*$, то мы предполагаем, что для константы $\Theta_0$
$$\min\limits_{x_* \in X_*} d(x_*) \leqslant \Theta_0^2.$$
Для всех $x, y\in X$ рассмотрим соответствующую дивергенцию Брэгмана
$$V(x, y) = d(y) - d(x) - \langle \nabla d(x), y-x \rangle.$$

В зависимости от постановки конкретной задачи возможны различные подходы к определению прокс-структуры задачи и соответствующей дивергенции Бргэмана: евклидова, энтропийная и многие другие (см., например, \cite{bib_Nemirovski}). Стандартно определим оператор проектирования
$$\mathrm{Mirr}_x (p) = \arg\min\limits_{u\in Q} \big\{ \langle p, u \rangle + V(x, u) \big\} \; \text{ для всяких  }x\in Q \text{ и }p\in E^*.$$
Сделаем предположение о том, что оператор $\mathrm{Mirr}_x (p)$ легко вычислим.

Напомним одно известное утверждение, которое вытекает из обычного неравенства Коши-Буняковского, а также $2ab \leqslant a^2 + b^2$. Поскольку функциональные ограничения у нас по-прежнему выпуклы, мы рассмотрим также отдельно оценку в выпуклом случае \cite{bib_Nemirovski}.

\begin{lemma}\label{lem1}
Пусть $f:X\rightarrow\mathbb{R}$~--- некоторый функционал. Для произвольного $y \in X$, вектора $p_y \in E^*$ и некоторого $h > 0$ положим $z=Mirr_{y}(h \cdot p_y)$. Тогда для произвольного $x\in Q$
\begin{equation}\label{equv7}
h\langle p_y, y-x\rangle\leqslant\frac{h^2}{2}||p_y||_*^2 + V(y,x) - V(z,x).
\end{equation}
Для выпуклого субдифференцируемого в точке $y$ функционала $f$ предыдущее неравенство для произвольного субградиента $p_y = \nabla f(y)$ примет вид
\begin{equation}\label{equv71}
h\cdot(f(y) - f(x)) \leqslant \langle\nabla f(y), y-x\rangle\leqslant\frac{h^2}{2}||\nabla f(y)||_*^2 + V(y,x) - V(z,x).
\end{equation}
\end{lemma}

Аналогично (\cite{bib_Stonyakin}, алгоритм 4) рассмотрим следующий алгоритм адаптивного зеркального спуска для задач (\ref{equv2})--(\ref{problem_statement_g}). Отметим, что ввиду предположения локальной липшицевости квазивыпуклого целевого функционала все его субградиенты конечны. Сделаем дополнительное предположение об отсутствии точек перегиба, т.е. градиент
$f$ может быть нулевым только в точке $x_*$.

\begin{algorithm}
\caption{Адаптивный зеркальный спуск, квазивыпуклый негладкий целевой функционал, много ограничений}
\label{alg4}
\begin{algorithmic}[1]
\REQUIRE $\varepsilon>0,\Theta_0:\,d(x_*)\leqslant\Theta_0^2$
\STATE $x^0=argmin_{x\in Q}\,d(x)$
\STATE $I=:\emptyset$
\STATE $N\leftarrow0$
\REPEAT
    \IF{$g(x^N)\leqslant\varepsilon$}
        \STATE $h_N\leftarrow\frac{\varepsilon}{||\nabla f(x^N)||_{*}}$
        \STATE $x^{N+1}\leftarrow Mirr_{x^N}(h_N\nabla f(x^N))\;\text{// \emph{"продуктивные шаги"}}$
        \STATE $N\rightarrow I$
    \ELSE
        \STATE // \emph{$(g_{m(N)}(x^N)>\varepsilon)\;\text{для некоторого}\; m(N)\in \{1,\ldots,M\}$}
        \STATE $h_N\leftarrow\frac{\varepsilon}{||\nabla g_{m(N)}(x^N)||_{*}^2}$
        \STATE $x^{N+1}\leftarrow Mirr_{x^N}(h_N\nabla g_{m(N)}(x^N))\;\text{// \emph{"непродуктивные шаги"}}$
    \ENDIF
    \STATE $N\leftarrow N+1$
\UNTIL{$\Theta_0^2 \leqslant \frac{\varepsilon^2}{2}\left(|I|+\sum\limits_{k\not\in I}\frac{1}{||\nabla g_{m(k)}(x^k)||_{*}^2}\right)$}
\ENSURE $\bar{x}^N:=argmin_{x^k,\;k\in I}\,f(x^k)$
\end{algorithmic}
\end{algorithm}

Для оценки скорости сходимости этого метода подобно (\cite{bib_Nesterov}, п. 3.2.2), для всякого ненулевого конечного субградиента (элемента субдифференциала Кларка) $\nabla f(x)$ целевого квазивыпуклого функционала $f$ введём следующую вспомогательную величину
\begin{equation}\label{eq:vfDef}
v_f(x, y) = \left\langle\frac{\nabla f(x)}{\|\nabla f(x)\|_{*}},x-y\right\rangle,\quad x \in Q.
\end{equation}

Аналогично (\cite{bib_Stonyakin}, теорема 2) с использованием леммы \ref{lem1} проверяется следующая
\begin{theorem}\label{th2}
Пусть $\varepsilon > 0$~--- фиксированное число и выполнен критерий остановки алгоритма \ref{alg4}. Тогда
\begin{equation}\label{eq09}
\min\limits_{k \in I} v_f(x^k,x_*)<\varepsilon.
\end{equation}
Отметим, что алгоритм \ref{alg4} работает не более
\begin{equation}\label{eqq08}
N=\left\lceil\frac{2\max\{1, M_g^2\}\Theta_0^2}{\varepsilon^2}\right\rceil
\end{equation}
итераций.
\end{theorem}

Теперь покажем, как можно оценить скорость сходимости предлагаемого метода. Для этого полезно следующее вспомогательное утверждение, которое есть аналог (\cite{bib_Nesterov}, лемма 3.2.1). Напомним, что под $x_*$ мы понимаем точное решение задачи \eqref{equv2}--\eqref{problem_statement_g}. Отличительной особенностью данного утверждения является то, что мы рассматриваем не выпуклый, а квазивыпуклый целевой функционал $f$. Предположение о его локальной липшицевости позволяет в качестве аппарата для исследования дифференциальных свойств использовать субдифференциал Кларка.
\begin{theorem}\label{Nesterov_lem}
Пусть $f: Q \rightarrow \mathbb{R}^n$ --- локально липшицев квазивыпуклый функционал.
Введем следующую функцию:
\begin{equation}\label{eq13}
\omega(\tau)=\max\limits_{x\in Q}\{f(x)-f(x_*):||x-x_*||\leqslant\tau\},
\end{equation} где $\tau$ - положительное число.
Тогда для всякого $x \in Q$
\begin{equation}\label{eq_lemma}
f(x) - f(x_*) \leqslant \omega(v_f(x,x_*)).
\end{equation}
\end{theorem}
\begin{proof}
Мы отправляемся от схемы рассуждений (\cite{bib_Nesterov}, лемма 3.2.1) с тем отличием, что вместо обычного субдифференциала выпуклой функции будет использоваться субдифференциал Кларка. Можно проверить, что $$v_f(x, x_*)=\min\limits_y\{||y-x_*||: \langle\nabla f(x),y-x\rangle=0\}.$$
Действительно, пусть $v_f(x, x_*) = ||y_*-x_*||$ для некоторого $y_*$: $\langle\nabla f(x), y_*-x\rangle=0$. Тогда $\nabla f(x)=\lambda s$, где $\langle s,y_*-x_*\rangle=||y_*-x_*||$ для некоторого $s$ такого, что $||s||_*=1$.
Поэтому
$$0=\langle\nabla f(x),y_*-x\rangle= \lambda\langle s,y_*-x_*\rangle+\langle\nabla f(x),x_*-x\rangle,$$
откуда
$$\lambda=\frac{\langle\nabla f(x),x-x_*\rangle}{||y_*-x_*||}=||\nabla f(x)||_* \text{  и  } v_f(x, x_*)= ||y_*-x_*||.$$

Остаётся лишь учесть существование конечной производной по направлению $h \in Q$ у всякого локально липшицева квазивыпуклого функционала $f$
$$
f'(x, h) = \lim\limits_{\lambda \downarrow 0} \frac{f(x+\lambda h) - f(x)}{\lambda}.
$$

Далее, с использованием свойства квазивыпуклости и \eqref{equv_Clarke} для локально липшицева квазивыпуклого функционала получаем:
$$
f'(x, h) = f_{Cl}^{\uparrow} (x, h) = \max\limits_{\nabla f(x) \in \partial_{Cl}f(x)} \langle \nabla f(x), h \rangle.
$$
Для всякого направления $h$ такого, что $\langle \nabla f(x), h \rangle > 0$ получаем $f'(x, h) > 0$. Поэтому имеет место
$f(x + \lambda h) \geqslant f(x)$ для произвольного направления $h$ такого, что $\langle \nabla f(x), h \rangle > 0$.
Неравенство $f(y) - f(x) \geqslant 0$ следует из теперь непрерывности функционала $f$ для всякого $y$ такого, что $\langle\nabla f(x),y-x\rangle=0$. Итак,
$$
f(x) - f(x_*) \leqslant f(y) - f(x_*) \leqslant \omega(v_f(x,x_*)).
$$
\end{proof}

На базе теорем \ref{th2} и \ref{Nesterov_lem} можно оценить скорость сходимости алгоритма для квазивыпуклого локально липшицева целевого функционала $f$ с липшицевым субградиентом. Используя доказанное в теореме \ref{th1} неравенство
$$f(x)\leqslant f(x_*)+(||\nabla f(x_*)||_* + \delta)||x-x_*||+\frac{1}{2}L||x-x_*||^2,$$
мы можем получить, что
$$\min\limits_{k\in I}f(x^k)-f(x_*)\leqslant\min\limits_{k\in I} \left\{(||\nabla f(x_*)||_*+ \delta)||x^k-x_*||+\frac{1}{2}L||x^k-x_*||^2\right\}.$$
Далее, по теореме \ref{Nesterov_lem} верно неравенство:
$$
f(x)-f(x_*)\leqslant \varepsilon \cdot (||\nabla f(x_*)||_*+ \delta) + \frac{1}{2} L\varepsilon^2.
$$
Поэтому справедливо

\begin{corollary}\label{cor1}
Пусть локально липшицев квазивыпуклый функционал $f$ имеет липшицев субградиент. Тогда после остановки алгоритма  верна оценка:
\begin{equation}\label{sol1}
\min\limits_{1\leqslant k\leqslant N}f(x^k)-f(x_*)\leqslant \varepsilon \cdot (||\nabla f(x_*)||_* + \delta)+\frac{L\varepsilon^2}{2},
\end{equation}
причём для всякого $k$
\begin{equation}\label{sol2}
g_m(x^k) \leq \varepsilon \quad \forall m = \overline{1, M}.
\end{equation}
\end{corollary}

Таким образом, остановка алгоритма 1 гарантирует достижение приемлемого качества найденного решения \eqref{sol1} --- \eqref{sol2}, а оценка \eqref{eqq08} указывает на его оптимальность с точки зрения нижних оракульных оценок \cite{nemirovsky1983problem} даже в классе выпуклых (а тем более и квазивыпуклых) целевых функционалов.

Полученные результаты, в частности, позволяют сделать такие выводы. Во-первых, алгоритм 1 применим для задач минимизации не только выпуклых, но и квазивыпуклых целевых функционалов. Во-вторых, особенности поведения целевого функционала в окрестности некоторых отдельных точек могут не сильно усложнять интерполяцию (модель) оптимизируемой функции, что может позволить сохранять при наличии таких особенностей оценки скорости сходимости метода.

\subsection*{Благодарности} Автор выражает огромную признательность Александру Владимировичу Гасникову и Юрию Евгеньевичу Нестерову за полезные обсуждения и рекомендации.


\begin{center}
Статья направлена в журнал "Труды Института математики и механики Уральского отделения РАН" 10.12.2018
\end{center}

\end{document}